\documentclass[12pt]{article}
\usepackage[margin=1in]{geometry}
\usepackage{amsmath,amssymb,amsthm}
\usepackage[hidelinks]{hyperref}

\newtheorem{theorem}{Theorem}
\newtheorem{lemma}[theorem]{Lemma}

\begin{document}

\title{The order of long rainbow arithmetic progressions}
\author{Jesse Geneson}
\date{}

\maketitle

\begin{abstract}
Let $T_k$ be the minimum positive integer $t$ such that, for every positive
integer $n$, every equinumerous $t$-coloring of $[tn]$ contains a rainbow
$k$-term arithmetic progression. Jungi\'{c}, Licht, Mahdian,
Ne\v{s}et\v{r}il and Radoi\v{c}i\'{c} conjectured that
$T_k=\Theta(k^2)$, while Conlon, Fox and Sudakov proved that
$T_k=O(k^2\log k)$. We prove the matching lower bound
$T_k=\Omega(k^2\log k)$, and hence $T_k=\Theta(k^2\log k)$.
\end{abstract}

\section{Introduction}

For each positive integer $m$, write $[m]=\{1,\ldots,m\}$, and use natural
logarithms unless a base is specified. A coloring is \emph{equinumerous} if
every color is used equally often. Throughout, an arithmetic progression has
positive common difference and is \emph{rainbow} if its terms have distinct
colors. For each integer $k\geq3$, let $T_k$ be the minimum positive integer
$t$ such that, for every positive integer $n$, every equinumerous $t$-coloring
of $[tn]$ contains a rainbow $k$-term arithmetic progression.

Jungi\'{c}, Licht, Mahdian, Ne\v{s}et\v{r}il and
Radoi\v{c}i\'{c}~\cite{JLMNR} introduced this problem. They proved that
$T_k=\Omega(k^2)$ and $T_k=O(k^3)$, and conjectured that
$T_k=\Theta(k^2)$. In an earlier note~\cite{G18}, we improved the upper bound to
$T_k\leq k^{5/2+o(1)}$. Balogh, Linz and Mattos~\cite{BLM} proved that
$T_k\leq k^2\exp\!\bigl((1+o(1))(\log\log k)^2\bigr)$, and Conlon, Fox and
Sudakov~\cite[Corollary~2.4]{CFS} proved the stronger bound
\[
                              T_k=O(k^2\log k).
\]

We prove that the logarithmic factor is necessary, refuting the conjecture
that $T_k=\Theta(k^2)$.

\begin{theorem}\label{thm:main}
There are positive constants $c$ and $C$ such that, for all sufficiently
large integers $k$,
\[
                      ck^2\log k\leq T_k\leq Ck^2\log k.
\]
Consequently, $T_k=\Theta(k^2\log k)$.
\end{theorem}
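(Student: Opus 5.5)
The upper bound $T_k\le Ck^2\log k$ is exactly Corollary~2.4 of Conlon, Fox and Sudakov~\cite{CFS}, so I would simply cite it. All the work is the lower bound. For $t=\lfloor ck^2\log k\rfloor$ and a suitable $n$, I need an equinumerous $t$-coloring of $[tn]$ with no rainbow $k$-term progression.

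The plan is a ``blocking'' construction. I would reserve a family of special colors and place them on a structured set $B\subseteq[tn]$ with one property: every $k$-term progression in $[tn]$ meets $B$ in two points of the same special color. The remaining colors then fill $[tn]\setminus B$ arbitrarily, each used $n$ times. Only differences $d\le tn/(k-1)$ occur, so I would split the range of $d$ into scales. The intended budget is this: differences at scale roughly $d$ are blocked by a special color class whose total cost is about $k^2/d$ colors, or $k/d$ per residue class. Summing the harmonic series over $d\le k$ then gives the $k^2\log k$ colors, which is the source of the logarithm.

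Concretely, for each $d\le k/2$ I would take the residues $x\equiv r\pmod{2d}$ and group them into consecutive blocks. Each block gets one special color and has length $\Theta(kd)$, so that it contains at least two terms of any progression of difference $d$ (or a small multiple of $d$) that passes through that residue window. The block sizes are chosen so that each color is used exactly $n$ times. Progressions with $d>k/2$ would be handled recursively by rescaling: a $k$-term progression of large difference, read off at coarse resolution, behaves like a progression of small difference on a coarser grid. I would therefore nest the construction at geometric scales $n_1\ll n_2\ll\cdots$ and choose $n$ as a product of these scales.

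The two counting steps are verifying that the total number of special colors is $O(k^2\log k)$, and that the equinumerous condition can be met by choosing $n$ divisible by all relevant moduli.

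I expect the main obstacle to be the interaction between scales. A progression whose difference is not aligned with any chosen modulus must still be forced to repeat a color. I must also ensure that the special colors for one scale do not get too few terms on progressions from another scale. If a direct residue-class argument fails here, I would first fall back on a probabilistic placement: place each special color as a random union of intervals of length $\Theta(k\cdot\text{scale})$, apply a union bound over all progressions of a given difference, and then use the alteration method to restore equinumerosity.
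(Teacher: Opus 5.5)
Citing Conlon--Fox--Sudakov for the upper bound is exactly what the paper does. Your framework is also the paper's: special classes that force two terms of every $k$-term progression into one color, with the other colors filled in arbitrarily. The gap is the blocking mechanism, which is the entire content of the lower bound, and as proposed it does not work.

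A progression of difference $d$ visits only the residues $a$ and $a+d$ modulo $2d$. So blocks lying inside residue classes modulo $2d$ catch every difference-$d$ progression only if the special residues meet every pair $\{r,r+d\}$, i.e.\ only if they fill about half of $[tn]$ for that single $d$. If each difference is handled by its own blocks, this is impossible for three values of $d$. Hence nothing realizes your claimed cost of about $k^2/d$ colors per difference.

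Nor is the relevant range $d\le k$. Whatever $n$ is, differences run up to $tn/(k-1)\ge t/(k-1)\approx ck\log k$, so the ``large $d$'' case you defer is most of the problem.

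Rescaling does not handle that case. The map $x\mapsto\lfloor x/m\rfloor$ sends a progression to a progression only when $m\mid d$, and agreement at a coarse scale does not put two points into the same actual class of size $n$.

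The probabilistic fallback fails on the same range. Once $d$ exceeds the length of the monochromatic intervals, the $k$ terms lie in different intervals whose colors are essentially independent under a random placement. Such a progression is then rainbow with probability at least roughly $1-\binom k2/t\approx1-1/(2c\log k)$, far too many failures for a union bound or alteration.

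What is needed shows up in a pair count. There are about $(tn)^2/(2k)$ progressions and fewer than $tn\cdot n/2$ monochromatic pairs. A pair at distance $m$ lies in at most $k\cdot\#\{j\le k-1: j\mid m\}$ progressions. So for $t=ck^2\log k$, monochromatic pairs must on average do useful work through at least about $c\log k$ divisors $j$ of their distance. Any scheme in which each pair serves $O(1)$ differences is capped at $t=O(k^2)$, the old bound. Heuristically the logarithm is $\sum_{j\le k}1/j$, the number of small divisors of a typical distance. It is a harmonic sum over the multiplier $j=D/d$, not over $d$.

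The paper makes this precise in two steps.
\begin{itemize}
\item Lemma~\ref{lem:separation-cover} produces a set $\mathcal A$ of only $O(P/\log k)$ distances (fewer than $M/4$), none divisible by a prime $M\in(k/5,k/2)$, such that every $d\le P$ with $M\nmid d$ divides some $D\in\mathcal A$ with $D/d\le k-M$. Its proof groups the primes into consecutive blocks $Q$ with $\Pi_Q/p\le J^{1/\ell}$ for $p\in Q$, and sends $d=p_1\cdots p_\ell$ to $\prod_i\Pi_{Q(p_i)}$. It then counts the resulting multisets of blocks via Lemma~\ref{lem:weighted-multisets}.
\item Lemma~\ref{lem:lane-coloring} makes each $D$ cost only two of the $M$ residue lanes, by gluing lane $r_D$ to lane $r_D+D$ via $x\mapsto x+D$. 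A progression with $M\nmid d$ has one of its first $k-j\ge M$ terms in lane $r_D$, and the term $j$ steps later lies in the same class. Differences divisible by $M$ are blocked inside lanes by a matching at lane-distance $H=\operatorname{lcm}(1,\ldots,q)$.
\end{itemize}

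Finally, you need a bad coloring for every $t\le ck^2\log k$, not just $t=\lfloor ck^2\log k\rfloor$, because the defining property of $T_k$ is not obviously monotone in $t$. The paper takes $n=12\lceil k^2/t\rceil$ for each $t$ and equalizes the classes with Lemma~\ref{lem:prefix-coarsening}.
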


For every sufficiently large integer $k$ and every positive integer
$t\leq ck^2\log k$, we prove the lower bound by constructing an equinumerous
$t$-coloring of a suitable interval with no rainbow $k$-term arithmetic
progression.

The construction first partitions a slightly longer interval into four-point
blocking classes in residue lanes modulo a prime $M$, then restricts and
coarsens the partition to obtain $t$ equal classes. Differences divisible by
$M$ are handled within lanes. For a target interval of length $N$, the
remaining differences $d\leq P=O(N/k)$ with $M\nmid d$ are covered by a set
$\mathcal A$ of $O(P/\log k)$ integers such that each $d$ divides some
$D\in\mathcal A$ with $D/d\leq k-M$. The factor $\log k$ saved over the
trivial cover yields the lower bound.

\section{A multiplicative separation cover}\label{sec:cover}

For real $x\geq1$, put
\[
 \pi(x)=\#\{p\leq x:p\text{ is prime}\},
 \qquad
 \vartheta(x)=\sum_{\substack{p\leq x\\p\text{ prime}}}\log p,
 \qquad
 \psi(x)=\sum_{\substack{p\text{ prime},\ a\in\mathbb Z_{\geq1}\\p^a\leq x}}\log p.
\]

\begin{lemma}\label{lem:chebyshev-lcm}
There is an absolute constant $C_0\geq1$ such that, for every real $x\geq2$,
\begin{equation}\label{eq:chebyshev-lcm-bounds}
 \vartheta(x)\leq C_0x,\qquad \psi(x)\leq C_0x,\qquad
 \operatorname{lcm}(1,\ldots,\lfloor x\rfloor)\leq e^{C_0x},
 \qquad \pi(x)\leq C_0\frac{x}{\log x}.
\end{equation}
\end{lemma}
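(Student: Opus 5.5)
We prove the four bounds in the order $\vartheta$, $\psi$, $\operatorname{lcm}$, $\pi$, since each one feeds the next. Everything rests on the classical Chebyshev argument through the central binomial coefficient.

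For $\vartheta$, note that for every integer $m\geq1$ every prime $p$ with $m+1<p\leq 2m+1$ divides $\binom{2m+1}{m}$. Also $\binom{2m+1}{m}\leq 4^m$, because it and $\binom{2m+1}{m+1}$ are two equal terms in the expansion of $2^{2m+1}$. Hence
\[
\vartheta(2m+1)-\vartheta(m+1)\leq m\log 4 .
\]
By strong induction on $n$ we get $\vartheta(n)\leq n\log 4$ for all integers $n\geq1$. For even $n>2$ we use $\vartheta(n)=\vartheta(n-1)$. For odd $n=2m+1$ we use the displayed inequality together with $\vartheta(m+1)\leq (m+1)\log 4$, which gives $\vartheta(2m+1)\leq (2m+1)\log 4$. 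The bound passes to real $x\geq2$ because $\vartheta(x)=\vartheta(\lfloor x\rfloor)\leq \lfloor x\rfloor\log 4\leq x\log 4$.

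For $\psi$, write $\psi(x)=\sum_{a\geq1}\vartheta(x^{1/a})$. The terms with $a\geq2$ are nonzero only for $a\leq \log_2 x$, and each is at most $x^{1/2}\log 4$. Their total is therefore at most $x^{1/2}\log_2 x\cdot\log 4$, which is $O(x)$ uniformly for $x\geq2$. So $\psi(x)\leq C_1x$ for some absolute $C_1$. The lcm bound then holds with equality in the exponent: $\log\operatorname{lcm}(1,\ldots,\lfloor x\rfloor)=\psi(x)$, since the exponent of $p$ in the lcm is the largest $a$ with $p^a\leq x$, and that is exactly the number of times $\log p$ is counted in $\psi(x)$.

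For $\pi$, split the primes at $\sqrt{x}$. We have
\[
\pi(x)\leq \sqrt{x}+\frac{\vartheta(x)}{\log\sqrt x}\leq \sqrt x+\frac{2x\log4}{\log x},
\]
because each prime $p>\sqrt x$ contributes $\log p>\tfrac12\log x$ to $\vartheta(x)$. Since $\sqrt x\leq C_2x/\log x$ for all $x\geq2$ (the ratio $\sqrt x\log x/x$ is bounded), this gives $\pi(x)\leq C_3x/\log x$. Finally we take $C_0=\max(1,\log4,C_1,C_3)$.

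The only step that needs any care is the Chebyshev estimate $\vartheta(n)\leq n\log4$, in particular handling the parity induction cleanly. The rest consists of elementary absorptions of lower-order terms into the constant. Alternatively, all four bounds could be cited as standard consequences of Chebyshev's theorem or the prime number theorem.
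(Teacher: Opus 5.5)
Your proposal is correct and follows essentially the same Chebyshev route as the paper. The one difference is in the first step. You use $\binom{2m+1}{m}\le 4^m$ with strong induction (Erd\H{o}s's variant), which gives $\vartheta(n)\le n\log 4$ directly. The paper telescopes $\binom{2u}{u}<4^u$ over powers of two and loses a harmless factor of $2$ when passing to real $x$. Your $\psi$, lcm, and $\pi$ steps match the paper's. You should still state the trivial base cases $n=1,2$ of your induction, since the odd step is circular at $m=0$.
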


\begin{proof}
For every integer $u\geq1$, the product of the primes in $(u,2u]$ divides
$\binom{2u}{u}$. Hence
\[
 \vartheta(2u)-\vartheta(u)\leq\log\binom{2u}{u}<u\log 4.
\]
Telescoping over $u=1,2,4,\ldots,2^{m-1}$ gives
$\vartheta(2^m)<2^m\log 4$ for every integer $m\geq1$. Put
$K_\vartheta=2\log 4$. For every real $x\geq2$, taking the least integer
$m$ with $2^m\geq x$, so that $2^m<2x$, gives
\[
 \vartheta(x)\leq\vartheta(2^m)<2^m\log 4\leq K_\vartheta x.
\]
Only exponents
$a\leq\log_2x$ contribute to $\psi(x)$. Separating $a=1$, $a=2$, and
$a\geq3$ gives
\[
 \psi(x)=\sum_{a=1}^{\infty}\vartheta(x^{1/a})
 \leq K_\vartheta\bigl(x+\sqrt{x}+x^{1/3}\log_2x\bigr)
 \leq3K_\vartheta x,
\]
where the last inequality uses $\log_2x\leq x^{2/3}$ for $x\geq2$.
The logarithm of
$\operatorname{lcm}(1,\ldots,\lfloor x\rfloor)$ is $\psi(\lfloor x\rfloor)$.
For the prime-counting bound, splitting the primes at $\sqrt{x}$ gives
\[
 \pi(x)-\pi(\sqrt{x})
 \leq\frac{2\vartheta(x)}{\log x}
 \leq2K_\vartheta\frac{x}{\log x}.
\]
Since $\pi(\sqrt{x})\leq\sqrt{x}\leq x/\log x$ for $x\geq2$, we have
$\pi(x)\leq(1+2K_\vartheta)x/\log x$. Taking
$C_0=1+3K_\vartheta$ proves all four bounds.
\end{proof}

\begin{lemma}\label{lem:weighted-multisets}
Fix a real number $A\geq1$. There are constants $L_A\geq1$ and $C_A>0$,
depending only on $A$, such that the following holds. Let $L$ and $P$ be real
numbers with $L\geq L_A$ and $e^L\leq P\leq e^{2L}$. For every integer
$1\leq\ell\leq\lfloor\log_2P\rfloor$, let $\mathcal Q_\ell$ be a countable
collection of labels of weights $w(Q)\geq2$.
\begin{samepage}
If, for every such
$\ell$ and every real $x\geq2$,
\[
 \#\{Q\in\mathcal Q_\ell:w(Q)\leq x\}
 \leq A\left(1+\frac{\ell x}{L}\right),
\]
then
\[
 \sum_{1\leq\ell\leq\lfloor\log_2P\rfloor}
 \#\left\{\{Q_1,\ldots,Q_\ell\}_{\mathrm{multi}}:
                 Q_1,\ldots,Q_\ell\in\mathcal Q_\ell,\quad
                 \prod_{i=1}^{\ell}w(Q_i)\leq P\right\}
 \leq C_A\frac P L.
\]
\end{samepage}
Here $\{Q_1,\ldots,Q_\ell\}_{\mathrm{multi}}$ denotes a multiset.
\end{lemma}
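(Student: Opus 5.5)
The plan is to treat each $\ell$ separately, peel off the heaviest label of each multiset, and split according to whether that label is light or heavy relative to $L/\ell$. In the light case the multiset uses only a bounded number of distinct labels, so it is cheap to count. In the heavy case the hypothesis gives a count linear in $x$ with the needed factor $1/L$. The rest is a Rankin-type bound on $\sum 1/m$ over the remaining $\ell-1$ labels. Throughout, write $N_\ell(x)=\#\{Q\in\mathcal Q_\ell:w(Q)\le x\}$. Each multiset is listed as $Q_1,\dots,Q_\ell$ with $w(Q_1)\le\dots\le w(Q_\ell)$, and $m=\prod_{i<\ell}w(Q_i)$.

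\emph{Light case: $w(Q_\ell)\le L/\ell$.} Then every $w(Q_i)\le L/\ell$. Since $\ell x/L\le1$ for $x\le L/\ell$, all labels lie in a set of at most $N_\ell(L/\ell)\le 2A$ labels. The number of multisets of size $\ell$ drawn from at most $2A$ labels is at most $(\ell+1)^{2A}$. Summing over $\ell\le\log_2P\le 3L$ gives $O_A(L^{2A+1})$. This is at most $P/L$ once $L\ge L_A$, because $P\ge e^L$.

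\emph{Heavy case: $w(Q_\ell)>L/\ell$.} Fix the first $\ell-1$ labels, so their product $m$ is fixed. Then $Q_\ell$ satisfies $L/\ell<w(Q_\ell)\le P/m$. On the range $x\ge L/\ell$ we have $1\le\ell x/L$, so the hypothesis gives $N_\ell(x)\le 2A\ell x/L$. Hence there are at most $2A\ell P/(Lm)$ choices of $Q_\ell$. It therefore suffices to show
\[
 \sum_{\ell}\frac{2A\ell}{L}\,P\,\Sigma_\ell\le C_A\frac PL,
 \qquad
 \Sigma_\ell=\sum_{\{Q_1,\dots,Q_{\ell-1}\}:\ m\le P/2}\frac1m .
\]

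\emph{Rankin step.} For $m\le P$ and any $\delta>0$ we have $1/m\le P^\delta m^{-1-\delta}$. Dropping the size constraint on $m$ then gives
\[
 \Sigma_\ell\le P^\delta\,h_{\ell-1}\bigl(\{w(Q)^{-1-\delta}\}_{Q\in\mathcal Q_\ell}\bigr),
\]
where $h_j$ is the complete homogeneous symmetric function. Partial summation against $N_\ell$ gives
\[
 S=\sum_{Q}w(Q)^{-1-\delta}\le A\,2^{-1-\delta}+\frac{A\ell}{L}\Bigl(1+\frac{2^{-\delta}}{\delta}\Bigr)+O(A),
\]
so with $\delta=\ell/L$ we get $S\le B_A$, a constant depending only on $A$. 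The price is $P^\delta\le e^{2L\delta}=e^{2\ell}$, using $P\le e^{2L}$.

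To bound $h_{\ell-1}$ with repetitions allowed, I would use the generating function and Cauchy's bound with radius $r=1$:
\[
 h_j\le r^{-j}\prod_Q\bigl(1-r\,w(Q)^{-1-\delta}\bigr)^{-1}.
\]
Because $w(Q)\ge2$, each factor satisfies $r\,w^{-1-\delta}\le\tfrac12$, so $-\log(1-y)\le 2y$ and the product is at most $e^{2S}$. This is the step I expect to be the main obstacle, because the multiset count must decay fast enough in $\ell$ to survive the $e^{2\ell}$ loss and the extra factor $\ell$. A crude bound $h_j\le e^{2S}$ with no $1/j!$ does not suffice: $\ell e^{2\ell}$ is not summable. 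I would instead take $r=\min(j/(eS'),1)$ with a slightly enlarged $S'$, or split off repeated labels, to obtain the factorial-type bound $h_j\le e^{O(S)}(eS/j)^{j}$.

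With that bound the total becomes
\[
 \frac{P}{L}\sum_\ell 2A\ell\,e^{2\ell}\,e^{O(B_A)}\Bigl(\frac{eB_A}{\ell-1}\Bigr)^{\ell-1},
\]
which is $O_A(P/L)$ because the series converges. Adding the light case proves the lemma.
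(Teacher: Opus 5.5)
\textbf{There is a genuine gap, at the step you flagged.} Your light case and the peeling of the heaviest label are correct. This is a genuinely different route from the paper, which uses no Rankin trick. The paper converts nonexceptional labels to ranks via $w_{m+E}\ge mL/(2A\ell)$. It then gets decay $1/(j!(j-1)!)$ for sets of distinct labels by counting distinct integer tuples of bounded product, and charges each excess multiplicity a factor $1/2$.

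The problem is not just a missing estimate. Your target bound $h_j\le e^{O(S)}(eS/j)^j$ is false for complete homogeneous sums. Moreover, with $\delta=\ell/L$, no estimate of $h_{\ell-1}$ can close the argument.
\begin{itemize}
\item The hypothesis allows $\mathcal Q_\ell$ to contain a label of weight $2$, as happens in the paper's application, where blocks are weighted by their least prime.
\item The multiset of $\ell-1$ copies of that label then gives $h_{\ell-1}\ge 2^{-(1+\delta)(\ell-1)}$.
\item For $P=e^{2L}$ and $\ell\le L$, your Rankin bound for $\Sigma_\ell$ is therefore at least $e^{2\ell}4^{-(\ell-1)}=4(e^2/4)^{\ell}$.
\item So your upper bound for $\sum_\ell \ell\,\Sigma_\ell$ is exponentially large in $L$, however $h_{\ell-1}$ is estimated.
\end{itemize}

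Factorial decay holds for elementary symmetric sums, where the Cauchy radius may be taken large. For $h_j$ the radius must stay below $\min_Q w(Q)^{1+\delta}$, which is $2^{1+\delta}$ when a weight-$2$ label is present, so only geometric decay is possible. Your choice $r=\min(j/(eS'),1)\le 1$ gives $r^{-j}\ge 1$, i.e.\ no decay at all. Splitting off repeated labels after the Rankin step does not help either: the loss $P^{\ell/L}$ has already been charged to these highly repeated multisets.

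\textbf{How to repair it.} Make the Rankin loss smaller than the geometric gain coming from $w\ge 2$.
\begin{itemize}
\item Take $\delta=c\ell/L$ with $e^{2c}<2$, say $c=1/4$. Your partial summation still gives $S\le A/2+O(A/c)$.
\item Use Cauchy's bound with $r=2/(1+\epsilon)$. This is legitimate because $r\,w(Q)^{-1-\delta}\le 2^{-\delta}/(1+\epsilon)<1$.
\item Only labels of weight below $4$ can have $r\,w(Q)^{-1-\delta}>1/2$, and there are at most $N_\ell(4)\le 13A$ of them. Each contributes a factor at most $(1+\epsilon)/\epsilon$.
\item The remaining factors contribute at most $e^{2rS}$.
\end{itemize}
Hence $h_{\ell-1}\le C'_A\bigl((1+\epsilon)/2\bigr)^{\ell-1}$. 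The series $\sum_\ell \ell\, e^{2c\ell}\bigl((1+\epsilon)/2\bigr)^{\ell-1}$ converges once $e^{2c}(1+\epsilon)<2$, e.g.\ $\epsilon=1/10$. No factorial decay is needed.

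Alternatively, remove excess multiplicities before applying Rankin, gaining a factor $2$ for each. Then tie the Rankin exponent to the number of distinct labels, so that $e_r\le S^r/r!$ applies. This is closer to the paper's bookkeeping.
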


\begin{proof}
Put $E=\lceil4A\rceil$. Choose $L_A\geq1$ so that
\begin{equation}\label{eq:exceptional-threshold}
 3L(3L+E)^{E-1}\leq\frac{e^L}{L}\qquad(L\geq L_A).
\end{equation}
Such a choice exists because $E$ depends only on $A$ and the left side is a
polynomial in $L$. Assume $L\geq L_A$.

We first record that, for every integer $r\geq1$ and every real $Y\geq1$,
\begin{equation}\label{eq:short-distinct-tuples}
 \#\{(n_1,\ldots,n_r)\in\mathbb Z_{\geq1}^r:
       n_1<\cdots<n_r,\ n_1\cdots n_r\leq Y\}
 \leq \frac{7^{r-1}Y(1+\log Y)^{r-1}}{r!(r-1)!}.
\end{equation}
Indeed, let $U_r(Y)$ count ordered $r$-tuples of positive integers of
product at most $Y$.  We claim that
\[
 U_r(Y)\leq \frac{Y(r+\log Y)^{r-1}}{(r-1)!}.
\]
The case $r=1$ is immediate, since $U_1(Y)=\lfloor Y\rfloor\leq Y$.  For
$r\geq2$, induction on $r$ gives
\[
 U_r(Y)=\sum_{\substack{n\in\mathbb Z_{\geq1}\\n\leq Y}}U_{r-1}(Y/n)
 \leq \frac{Y}{(r-2)!}
 \sum_{\substack{n\in\mathbb Z_{\geq1}\\n\leq Y}}
 \frac{(r-1+\log(Y/n))^{r-2}}n.
\]
The summand is a decreasing function of the real variable $n$ on $[1,Y]$,
so the sum is at most its value at $n=1$ plus the corresponding integral,
which the substitution $u=\log(Y/n)$ evaluates: with $B=r-1+\log Y$,
\begin{align*}
 \sum_{\substack{n\in\mathbb Z_{\geq1}\\n\leq Y}}
 \frac{(r-1+\log(Y/n))^{r-2}}n
 &\leq B^{r-2}+\int_0^{\log Y}(r-1+u)^{r-2}\,du\\
 &= B^{r-2}+\frac{B^{r-1}-(r-1)^{r-1}}{r-1}
 \leq \frac{(B+1)^{r-1}}{r-1},
\end{align*}
the last step because the binomial theorem gives
$(B+1)^{r-1}\geq B^{r-1}+(r-1)B^{r-2}$.  Since $B+1=r+\log Y$, the claim
follows.  Now suppose the set in \eqref{eq:short-distinct-tuples} is
nonempty.  Then $n_i\geq i$ for each $i$, so $Y\geq r!$ and hence
$\log Y\geq\log(r!)\geq\int_1^r\log x\,dx=r\log r-r+1$.  If $r\geq8$, then
$\log r\geq2$, so $\log Y\geq r$ and $r+\log Y\leq2\log Y$; if $r\leq7$, then
$r+\log Y\leq7(1+\log Y)$, since $\log Y\geq0$.  In both cases
$r+\log Y\leq7(1+\log Y)$, so
\[
 \#\{(n_1,\ldots,n_r)\in\mathbb Z_{\geq1}^r:
       n_1<\cdots<n_r,\ n_1\cdots n_r\leq Y\}
 \leq\frac{U_r(Y)}{r!}
 \leq\frac{Y\,7^{r-1}(1+\log Y)^{r-1}}{r!(r-1)!},
\]
after division by the $r!$ orderings of a tuple with distinct entries.
This proves \eqref{eq:short-distinct-tuples}.

Fix an integer $\ell$ in the range specified by the lemma and index the labels
of $\mathcal Q_\ell$ in nondecreasing order of weight as
$w_1\leq w_2\leq\cdots$, stopping if the list is finite. Such an indexing
exists because the counting hypothesis gives finitely many labels of weight
at most $x$ for every $x$. Call the first $E$ labels exceptional, or all labels
if fewer than $E$ exist. For every integer $m\geq1$ for which $w_{m+E}$
exists, the $m$th nonexceptional weight satisfies
\begin{equation}\label{eq:short-weight-rank}
                         w_{m+E}\geq\frac{mL}{2A\ell}.
\end{equation}
If $n$ is an integer with $n>E\geq2A$ and $w_n$ exists, then the first $n$
weights are at most $w_n$.
Thus the counting hypothesis applied at $x=w_n\geq2$ gives
$n\leq A(1+\ell w_n/L)$, so
$\ell w_n/L\geq n/A-1\geq n/(2A)$, using $n\geq2A$.  Hence
$w_n\geq nL/(2A\ell)$, and taking $n=m+E\geq m$ proves
\eqref{eq:short-weight-rank}.

Put
\[
 B_A=3+3\log(6A),\qquad K_A=14AB_A.
\]
We next claim that, for every integer $j\geq1$ and every real $X$ with
$0<X\leq P$, the number of $j$-element sets of nonexceptional labels of
$\mathcal Q_\ell$ with weight product at most $X$ is at most
\begin{equation}\label{eq:short-subset-bound}
 \frac XL\frac{(K_A\ell)^j}{j!(j-1)!}.
\end{equation}
If no such set exists, there is nothing to prove.  Otherwise, since every
weight is at least $2$, the existence of one such set forces
$2^j\leq X\leq P\leq e^{2L}$, so $j\leq2L/\log 2<3L$; also
$\ell\leq\log_2P\leq2L/\log 2<3L$ by the hypothesis of the lemma.  The
labels of any such $j$-set occupy $j$ distinct positions
$m_1<\cdots<m_j$ among the nonexceptional indices, and
\eqref{eq:short-weight-rank} gives
\[
 m_1\cdots m_j
 \leq\Bigl(\frac{2A\ell}{L}\Bigr)^{j}w_{m_1+E}\cdots w_{m_j+E}
 \leq Y,\qquad
 Y=\Bigl(\frac{2A\ell}{L}\Bigr)^{j}X.
\]
In particular $Y\geq1$.  Since $j<3L$, $\ell/L<3$ and $\log X\leq2L$,
\[
 1+\log Y\leq1+j\log(6A)+\log X\leq B_A L.
\]
The map sending a $j$-set of labels to its set of positions is injective,
so \eqref{eq:short-distinct-tuples} with $r=j$ bounds the number of such
$j$-sets by
\[
 \frac{7^{j-1}Y(1+\log Y)^{j-1}}{j!(j-1)!}
 \leq\frac{7^{j-1}(2A)^j\ell^jX(B_A L)^{j-1}}
 {L^j\,j!(j-1)!}
 \leq\frac XL\frac{(K_A\ell)^j}{j!(j-1)!},
\]
which proves \eqref{eq:short-subset-bound}. Since $\ell$ was arbitrary,
\eqref{eq:short-subset-bound} holds for every $\ell$ in the range specified by
the lemma.

For every integer $1\leq\ell\leq\lfloor\log_2P\rfloor$, call an $\ell$-element
multiset of labels of $\mathcal Q_\ell$ \emph{admissible} if its weight product
is at most $P$. It suffices to bound the total number of admissible multisets
over all $\ell$. Given such an $\ell$, let an admissible $\ell$-multiset have
$r$ distinct labels, of which $e\leq E$ are exceptional and $j=r-e$ are not,
and put $m=\ell-r\geq0$, so that $\ell=m+r=m+j+e$.

We first count the multisets for which $j\geq1$. For integers $m\geq0$,
$j\geq1$, and
$0\leq e\leq E$, put $r=j+e$ and $\ell=m+r$. If
$\ell\leq\lfloor\log_2P\rfloor$, let $N_{m,j,e}$ be the number of admissible
multisets with these parameters; otherwise put $N_{m,j,e}=0$. In the first
case, such a multiset is determined by its exceptional labels, its
nonexceptional labels, and its multiplicities. There are at most
$\binom{E}{e}$ choices for its exceptional labels, and a fixed set of $r$
labels carries exactly
$\binom{\ell-1}{r-1}=\binom{m+r-1}{r-1}$ positive multiplicity vectors.
The $m$ units of excess multiplicity contribute a factor at least $2^m$ to
the weight product. Thus the product of the distinct weights, and hence the
product of the nonexceptional weights, is at most $X=P2^{-m}$.
Applying \eqref{eq:short-subset-bound} gives
\begin{equation}\label{eq:short-repeat-term}
 \frac{N_{m,j,e}}{P/L}\leq
 \binom{E}{e}\binom{m+r-1}{r-1}2^{-m}
 \frac{(K_A(m+r))^j}{j!(j-1)!}.
\end{equation}
In the second case, \eqref{eq:short-repeat-term} is trivial. Thus we may sum
over all $m\geq0$, $j\geq1$, and $0\leq e\leq E$.

Set
\[
 z=2^{-1/2},\qquad \alpha=(1-z)^{-1},\qquad
 D_A=2\alpha^2+2\alpha\exp(E+1),\qquad
 G_A=\alpha K_A D_A.
\]
Since $\binom{m+r-1}{r-1}$ is the coefficient of $u^m$ in $(1-u)^{-r}$
and all coefficients are nonnegative, evaluating the series at $u=z$ and
using $z^2=1/2$ gives
\[
 \binom{m+r-1}{r-1}2^{-m}
 \leq\alpha^r z^m.
\]
For $j\geq1$ and $r=j+e\leq j+E$, convexity gives
\begin{align*}
 \sum_{m\geq0}z^m(m+r)^j
 &\leq2^{j-1}\left(
      \sum_{m\geq0}z^m m^j+\frac{r^j}{1-z}\right)\\
 &\leq2^{j-1}\left(
      \frac{j!}{(1-z)^{j+1}}+\frac{(j+E)^j}{1-z}\right)
 \leq D_A^j j!.
\end{align*}
For the second inequality, we used
$m^j\leq j!\binom{m+j}{j}$ and
$\sum_{m\geq0}\binom{m+j}{j}z^m=(1-z)^{-j-1}$. For the last inequality,
we used
\[
 (j+E)^j\leq\exp(E)j^j\leq\exp(E+j)j!,
\]
where the second bound follows from
$\log(j!)\geq\int_1^j\log x\,dx=j\log j-j+1$. After multiplication by
$2^{j-1}$, the terms $j!/(1-z)^{j+1}$ and $(j+E)^j/(1-z)$ are at most
$(2\alpha^2)^j j!$ and $(2\alpha\exp(E+1))^j j!$, respectively. Since $a^j+b^j\leq(a+b)^j$ for
$a,b\geq0$ and $j\geq1$, their sum is at most $D_A^j j!$.

It follows from \eqref{eq:short-repeat-term} that, for fixed $j$ and $e$,
\[
 \frac{1}{P/L}\sum_{m\geq0}N_{m,j,e}
 \leq\binom{E}{e}\alpha^e\frac{G_A^j}{(j-1)!}.
\]
Therefore the number of admissible multisets with $j\geq1$ is at most
\begin{align*}
 \frac PL
 \sum_{e=0}^E\binom{E}{e}\alpha^e
 \sum_{j\geq1}\frac{G_A^j}{(j-1)!}
 &=\frac PL(1+\alpha)^E G_A\exp(G_A).
\end{align*}
Put $F_A=(1+\alpha)^E G_A\exp(G_A)$.

If $j=0$, every label of the multiset is exceptional, so the multiset is
an $\ell$-multiset drawn from a set of at most $E$ labels, and there are
at most $\binom{\ell+E-1}{E-1}\leq(\ell+E)^{E-1}$ of these. Since
$\lfloor\log_2P\rfloor<3L$, their total number is at most
$3L(3L+E)^{E-1}\leq e^L/L\leq P/L$ by
\eqref{eq:exceptional-threshold}. Taking $C_A=F_A+1$ proves the lemma.
\end{proof}

\begin{lemma}\label{lem:separation-cover}
\begin{samepage}
There are absolute constants $J_0\geq2$ and $C_1>0$ such that the following
holds. Let $J$ and $P$ be real numbers with $J\geq J_0$ and
$J\leq P\leq J^2$, and let $M$ be prime. Then there is a set $\mathcal A$
of positive integers, none divisible by $M$, with
\[
                         |\mathcal A|\leq C_1\frac P{\log J},
\]
such that every positive integer $d\leq P$ with $M\nmid d$ has some
$D\in\mathcal A$ satisfying
\[
                         d\mid D,\qquad D/d\leq J.
\]
\end{samepage}
\end{lemma}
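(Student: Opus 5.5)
The plan is to deduce the statement from Lemma~\ref{lem:weighted-multisets} with $L=\log J$. The hypothesis $J\le P\le J^2$ is exactly $e^L\le P\le e^{2L}$, and $J_0$ is chosen with $\log J_0\ge L_A$ for a suitable absolute $A$. The set $\mathcal A$ will consist of integers $D$ indexed by admissible multisets of labels, one $D$ per multiset. The lemma then gives $|\mathcal A|\le C_AP/L=C_AP/\log J$, which is the required bound with $C_1=C_A$.

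The construction I would try is as follows. Write each $d\le P$ as $d=p_1\cdots p_\ell$ with $p_1\le\dots\le p_\ell$ prime and $\ell=\Omega(d)\le\lfloor\log_2P\rfloor$. For this $\ell$, build a collection $\mathcal Q_\ell$ of integer labels $w\ge2$ such that every prime $p$ divides some label $w(p)\in\mathcal Q_\ell$ with multiplicative loss $w(p)/p\le e^{cL/\ell}$. Then $d$ divides $D=\prod_i w(p_i)$ and
\[
D/d=\prod_{i=1}^{\ell}\frac{w(p_i)}{p_i}\le e^{cL}.
\]
Taking $c=1$ gives $D/d\le J$. I would first try the following labels: for each small prime $p\le\ell/L$ take the label $p$ itself, and for larger primes group all primes in a short multiplicative window $[y,ye^{L/(2\ell)}]$ into one label. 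By Lemma~\ref{lem:chebyshev-lcm} (the $\pi$ bound) the number of labels of weight at most $x$ should be $O(1+\ell x/L)$, which is the hypothesis of Lemma~\ref{lem:weighted-multisets}. Since $\prod_i w(p_i)$ could exceed $P$, I would define the weight of a label as the representative size $\approx p$ rather than the integer $w(p)$. Then the multiset $\{w(p_i)\}$ has weight product at most $P$, while the actual $D$ is at most $Je^{O(1)}P$; this is harmless.

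The requirement $M\nmid D$ is handled last. Since $M\nmid d$, if a chosen $D$ is divisible by $M$ I replace it by $D/M^{v_M(D)}$. This still is a multiple of $d$, has a smaller ratio to $d$, and does not increase $|\mathcal A|$.

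The main obstacle is the label design. It must simultaneously keep the per-prime loss $w(p)/p$ at $e^{O(L/\ell)}$, so that the losses over the $\ell$ factors multiply to at most $J$, and keep the label count at weight $\le x$ down to $O(1+\ell x/L)$. For primes near $y$, a window of multiplicative width $e^{L/\ell}$ contains about $(L/\ell)\,y/\log y$ primes, so one label per window gives roughly $\ell/(L\log y)\cdot x$ labels up to $x$. This is better than the requirement, and the slack is what makes the scheme plausible. What must actually be checked is that a single label serves all primes in its window while its divisibility role stays consistent with $d\mid D$. If grouping primes into one composite label fails, I would instead let the labels be the integers $p\cdot s$, with $s$ ranging over a sparse set of multipliers. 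I would then tune the multipliers until the counting hypothesis of Lemma~\ref{lem:weighted-multisets} holds with an absolute constant $A$.
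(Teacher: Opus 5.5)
Your reduction to Lemma~\ref{lem:weighted-multisets} with $L=\log J$, taking $D=\prod_i w(p_i)$ and weighting each label by a prime no larger than any prime it serves, is the same frame the paper uses. Your handling of $M$ by dividing out $M^{v_M(D)}$ also works (the paper simply omits $M$ from the prime list). The gap is the label design, which is the real content of the lemma.

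Because $d\mid D$ forces $p\mid w(p)$, a label serving every prime of a window must be divisible by all of them. So the loss $w(p)/p$ is the product of the \emph{other} primes in the window, not the endpoint ratio $e^{L/(2\ell)}$. For $\ell=1$ and $y=2$, the window $[2,2\sqrt J]$ gives loss about $e^{\vartheta(2\sqrt J)}$, vastly larger than $J$. More generally, under your own requirement $w(p)/p\le R_\ell=J^{1/\ell}$, two primes exceeding $R_\ell$ can never share a label. So the ``slack'' you compute (about $\ell x/(L\log y)$ labels up to $x$) does not exist: above $R_\ell$ you need at least one label per prime. Your fallback with labels $p\cdot s$ is unspecified and does not address this.

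The missing idea is to group primes by their \emph{product}, not by the spread of a window. The paper lists the primes other than $M$ in increasing order and cuts them greedily into consecutive blocks $Q$. In each block, the product of all but the least prime is at most $R_\ell$. Hence $\Pi_Q/p\le R_\ell$ for every $p\in Q$, and $D(d)/d\le R_\ell^{\ell}=J$. Primes above $R_\ell$ automatically become singletons.

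Write $\rho=L/\ell$. The counting hypothesis $B_\ell(x)\le\Gamma(1+x/\rho)$ then holds for three separate reasons:
\begin{itemize}
\item If $\rho\le2$, it is trivial: $\pi(x)\le x\le 2x/\rho$.
\item For block minima at most $e^{\rho/2}$, greedy maximality forces every such block except the last to have appended-prime product exceeding $e^{\rho/2}$. These appended sets are disjoint, so there are at most $1+2\vartheta(x)/\rho$ such blocks.
\item For block minima above $e^{\rho/2}$, $\pi(x)\le C_0x/\log x<2C_0x/\rho$.
\end{itemize}
So singletons are affordable because $\log x\ge\rho/2$ in that range, not because of any grouping gain.

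One minor omission: you also need $1\in\mathcal A$ for $d=1$, since $\ell=0$ is outside the range of Lemma~\ref{lem:weighted-multisets}. This gives $C_1=C_A+1$, using $P/\log J\ge1$.
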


\begin{proof}
Let $C_0$ be the constant in Lemma~\ref{lem:chebyshev-lcm}, and put
$\Gamma=2+4C_0$. Let $L_{\Gamma}$ and $C_{\Gamma}$ be constants supplied by
Lemma~\ref{lem:weighted-multisets} for $A=\Gamma$. Choose $J_0\geq2$ so that
\[
 \log J\geq L_{\Gamma},\qquad \frac{J}{\log J}\geq1
 \qquad(J\geq J_0),
\]
and put $C_1=C_{\Gamma}+1$.

Fix $J\geq J_0$, $J\leq P\leq J^2$, and a prime $M$. For every integer
$1\leq\ell\leq\lfloor\log_2P\rfloor$, put $R_\ell=J^{1/\ell}$. List all
primes except $M$ in increasing order. Starting with the least unused prime
$p_0$, append successive unused primes as long as the product of the appended
primes remains at most $R_\ell$. When the next prime would make this product
exceed $R_\ell$, leave it unused and close the block. Since the product would
eventually exceed $R_\ell$ if the process continued, each block is finite;
repeating the procedure partitions the prime list. If $Q$ is a block and
$\Pi_Q=\prod_{p\in Q}p$, then
\begin{equation}\label{eq:short-block-quotient}
                         \Pi_Q/p\leq R_\ell\qquad(p\in Q).
\end{equation}
For $p=p_0$, this is the defining property of the block. Replacing $p_0$ by
a larger prime of the block decreases the quotient.

Let $B_\ell(x)$ count blocks whose least prime is at most $x$, and put
$\rho=\log R_\ell=(\log J)/\ell$.  Since $P\leq J^2$ and
$\ell\leq\lfloor\log_2P\rfloor$, we have $\rho\geq(\log 2)/2$. We claim that
\begin{equation}\label{eq:short-block-count}
 B_\ell(x)\leq\Gamma\left(1+\frac{x}{\rho}\right)\qquad(x\geq2).
\end{equation}
If $\rho\leq2$, this follows from
$B_\ell(x)\leq\pi(x)\leq x\leq2x/\rho$. Otherwise put
$y=\min\{x,\exp(\rho/2)\}$. The blocks with least prime at most $y$ form an
initial segment of the block sequence; apart from the last of them, every
such block has a successor beginning with a prime $q\leq y$. If $E_Q$ is
the product of its appended primes, greedy maximality gives
$E_Qq>\exp(\rho)$, and hence, since $q\leq\exp(\rho/2)$,
$\log E_Q>\rho/2$. The appended-prime sets are disjoint and contained in
$[2,x]$, so the number of these initial blocks is at most
$1+2\vartheta(x)/\rho\leq1+2C_0x/\rho$. If $x>\exp(\rho/2)$, the remaining
block minima lie in $(\exp(\rho/2),x]$ and number at most
\[
 \pi(x)\leq C_0\frac{x}{\log x}<2C_0\frac{x}{\rho}.
\]
Thus $B_\ell(x)\leq1+4C_0x/\rho$, which implies
\eqref{eq:short-block-count} by the definition of $\Gamma$.

Put $D(1)=1$. For every integer $d$ with $2\leq d\leq P$ and $M\nmid d$,
write its prime factorization as $d=p_1\cdots p_\ell$, with multiplicity.
Then $\ell\leq\lfloor\log_2P\rfloor$, so the partition corresponding to
$\ell$ is defined.
\begin{samepage}
Using that partition, define
\[
                         D(d)=\prod_{i=1}^{\ell}\Pi_{Q(p_i)},
\]
where $Q(p)$ denotes the block containing $p$.
\end{samepage}
\begin{samepage}
Equation \eqref{eq:short-block-quotient} gives
\[
                         d\mid D(d),\qquad D(d)/d
 =\prod_{i=1}^{\ell}\frac{\Pi_{Q(p_i)}}{p_i}\leq R_\ell^\ell=J,
\]
and $M\nmid D(d)$ because $M$ belongs to no block.
\end{samepage}

Set
\[
 \mathcal A=\{D(d):d\in\mathbb Z_{>0},\ d\leq P,\ M\nmid d\}.
\]
Give each block the
weight of its least prime; all weights are at least $2$.  The value $D(d)$
depends only on the multiset of blocks $Q(p_1),\ldots,Q(p_\ell)$, whose
weight product is at most $p_1\cdots p_\ell=d\leq P$. For every integer
$1\leq\ell\leq\lfloor\log_2P\rfloor$, \eqref{eq:short-block-count} verifies
the hypothesis of
Lemma~\ref{lem:weighted-multisets} with $A=\Gamma$ and $L=\log J$, since
\[
 \Gamma(1+x/\rho)=\Gamma(1+\ell x/\log J).
\]
Moreover, $e^L=J\leq P\leq J^2=e^{2L}$ and $L=\log J\geq L_{\Gamma}$. The
lemma therefore bounds the number of multisets of blocks with weight product
at most $P$, and hence the number of distinct values $D(d)$ with
$2\leq d\leq P$ and $M\nmid d$, by $C_{\Gamma}P/\log J$. Finally,
$P/\log J\geq J/\log J\geq1$, so
\[
 |\mathcal A|\leq C_{\Gamma}\frac P{\log J}+1
 \leq C_1\frac P{\log J}.
\]
\end{proof}

\section{The coloring construction}

We work on intervals of the form $\{0,\ldots,N-1\}$; translation by one
gives the interval $[N]$.

\begin{lemma}\label{lem:lane-coloring}
Let $k\geq3$ be an integer, let $M$ be prime, let $H,L$ be positive
integers, let $N_0=ML$, and put
\[
 P=\left\lfloor\frac{N_0-1}{k-1}\right\rfloor.
\]
\begin{samepage}
Suppose that
\begin{enumerate}
\item $4H\mid L$ and $2H\leq k-1$;
\item every positive integer at most $\lfloor P/M\rfloor$ divides $H$;
\item there is a set $\mathcal A$ of fewer than $M/4$ positive integers,
none divisible by $M$, such that, for every positive integer $d\leq P$ with
$M\nmid d$, some $D\in\mathcal A$ satisfies
\[
 D=jd\qquad\text{for some integer }1\leq j\leq k-M.
\]
\end{enumerate}
\end{samepage}
Then $\{0,\ldots,N_0-1\}$ has a partition into four-point classes such that
every $k$-term arithmetic progression in the interval contains two distinct
terms in one class.
\end{lemma}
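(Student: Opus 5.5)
The plan is to build the partition lane by lane, where the \emph{lanes} are the residue classes $\{r+Mi:0\leq i<L\}$ modulo $M$. A $k$-term progression in $\{0,\ldots,N_0-1\}$ with difference $d$ has $(k-1)d\leq N_0-1$, so $d\leq P$. I will split into the cases $M\mid d$ and $M\nmid d$. The first case is handled inside lanes using hypotheses~1 and~2. The second is handled by pairing lanes using hypothesis~3.

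\emph{Within-lane blocking.} In a lane, identify the point $r+Mi$ with its index $i$. Cut the indices $0,\ldots,L-1$ into consecutive windows of length $2H$; this is possible since $4H\mid L$. In each window pair the index at offset $b$ with the one at offset $b+H$, for $0\leq b<H$. Now let $d=Me$ with $M\mid d$. Then $e\leq\lfloor P/M\rfloor$, so $e\mid H$ by hypothesis~2, and the progression lies in one lane with index step $e$. Since $2H/e\leq 2H\leq k-1$, the progression contains the indices $i$, $i+H$ and $i+2H$. If $i$ sits at offset less than $H$ in its window, then $i$ and $i+H$ are paired. Otherwise $i+H$ and $i+2H$ are both in the next window, at offsets $p-H$ and $p$ with $p<H$, and are paired. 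So any partition whose classes are unions of these within-lane pairs blocks every $M$-divisible difference.

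\emph{Lane pairing for $M\nmid d$.} Take $\mathcal A$ from hypothesis~3. For each $D\in\mathcal A$ choose a residue $r_D$ such that the $2|\mathcal A|$ lanes $r_D$ and $r_D+D \pmod M$ are pairwise distinct over all $D$. A greedy choice works: each earlier $D'$ forbids at most $4$ values of $r_D$, and $4(|\mathcal A|-1)<M$. The $4|\mathcal A|<M$ used lanes leave at least one lane free; all unused lanes will be partitioned into four-point classes of two adjacent within-lane pairs, which is legitimate because $4H\mid L$. For a used pair of lanes $(r_D,r_D+D)$, form the classes
\[
\{y,\ y+MH,\ y+D,\ y+D+MH\},
\]
where $y$ runs over the first-half points of the windows in lane $r_D$. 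The windows in lane $r_D+D$ are shifted so that the points $y+D$ land on first-half positions there.

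Given a progression with $M\nmid d$, hypothesis~3 gives $D=jd\in\mathcal A$ with $j\leq k-M$. The starting indices $t=0,\ldots,M-1$ all satisfy $t+j\leq k-1$, so each term $x+td$ with $t\leq M-1$ has $x+td+D$ in the progression. These $M$ terms have distinct residues mod $M$ because $M\nmid d$, so one of them, say $y'$, lies in lane $r_D$. Then $y'$ and $y'+D$ are two distinct terms of the progression, and they lie in a common class provided $y'$ is matched there.

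\emph{Main obstacle.} The hard step is making the lane-pair classes consistent at the ends of the interval. A point $y$ in lane $r_D$ with $y+D\geq N_0$ has no partner, and the bottom points of lane $r_D+D$ are not of the form $y+D$. The lane index of $y+D$ is also shifted relative to that of $y$, so the window structure in lane $r_D+D$ must be offset. This offset has to be chosen so that $y$ and $y+MH$ in the same class still form a within-lane pair in both lanes.

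My first attempt will use the fact that the unmatched top segment of lane $r_D$ and the unmatched bottom segment of lane $r_D+D$ have equal sizes. I would group these leftover points into four-point classes of within-lane pairs, so the $M\mid d$ argument still covers them. Such a leftover point never needs its $D$-partner, because a progression term $y'$ with $y'+D$ a term lies in the interval, hence is matched. The divisibility $4H\mid L$ is what I expect to make the leftover counts and the window offsets align into four-point classes; verifying this bookkeeping is where I expect most of the work.
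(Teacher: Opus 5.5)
The within-lane blocking, the greedy choice of disjoint lane pairs, and the $M\nmid d$ argument are the same as in the paper. The gap is the step you flag as the main obstacle: you leave it unresolved, and your plan for it rests on a wrong description of what is left over.

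Write $r_D+D=r'+Ms$ with $0\le r'<M$, so that $y+D$ has lane index $i+s$ when $y$ has index $i$. Suppose $2H\nmid s$ and $s\le L-H$. Then the $H$ consecutive indices $L-H-s,\ldots,L-1-s$ contain a first-half index $i$ of lane $r_D$. For the corresponding $y$, the point $y+D$ lies in the interval but $y+D+MH$ does not. For instance, with $H=2$, $L=16$, $s=1$, $i=13$, these points would have lane indices $14$ and $16$.

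This $y$ is not in your top segment, since $y+D<N_0$. A progression may use $y$ and $y+D$ as its two blocked terms, yet the class $\{y,y+MH,y+D,y+D+MH\}$ does not fit in the interval. So your claim that every point whose $D$-partner lies in the interval sits in a full class is false. Relatedly, when $2H\nmid s$ the shifted windows in lane $r'$ do not tile $\{0,\ldots,L-1\}$. Some points at both ends of that lane then have no in-interval window partner, and they can only be paired across the two ends, that is, cyclically.

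The missing idea is to reduce modulo $N_0$. Let $F_D(x)$ be the representative of $x+D$ modulo $N_0$. Since $M\mid N_0$, this is a bijection from lane $r_D$ onto lane $r_D+D$, acting on lane indices as the cyclic shift $q\mapsto q+s$ of $\mathbb{Z}_L$. Put the window matching on $\mathbb{Z}_L$, taken cyclically, in lane $r_D$, and form the classes $\{x,y,F_D(x),F_D(y)\}$ over its edges $\{x,y\}$. There are no leftovers, and lane $r_D+D$ carries a cyclic translate of the matching.

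Wrap-around edges never hurt:
\begin{itemize}
\item In the $M\mid d$ case, the middle index $u+H$ of a triple $u,u+H,u+2H$ is matched to $u$ or $u+2H$ modulo $L$. Both of these are genuine indices in $[0,L-1]$, so its partner is an actual term.
\item In the $M\nmid d$ case, $y'+D$ is a term of the progression and so lies in the interval. Hence $F_D(y')=y'+D$ with no wrap.
\end{itemize}
In the example above, this construction produces the class with indices $13,15$ in lane $r_D$ and $14,0$ in lane $r'$. That is exactly the wrap-around class your bookkeeping would have had to find.
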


\begin{proof}
We first record the internal matching used in every residue class. On
$\mathbb{Z}_L$, join
\[
 u+2vH\quad\text{to}\quad u+(2v+1)H
\]
for integers $u$ and $v$ with $0\leq u<H$ and $0\leq v<L/(2H)$. This is a
perfect matching on every cycle generated by addition of $H$, and every
matching edge joins two elements of $\mathbb{Z}_L$ that differ by $H$. If
$e$ is a positive divisor of $H$ and $h=H/e$, then the terms at indices
$0,h,2h$ of a $k$-term integer progression of difference $e$ contained in
$\{0,\ldots,L-1\}$ are $y,y+H,y+2H$, where $y$ is its first term. These
indices exist because $2h\leq2H\leq k-1$, and the matching edge covering
$y+H$ joins it to one of its two $H$-neighbors $y$ and $y+2H$ in
$\mathbb{Z}_L$, so one of the two adjacent pairs is matched. The same holds
for every cyclic translate of the matching, whose edges again join elements
differing by $H$.

Write each point uniquely as $r+Mq$, with integers $r$ and $q$ satisfying
$0\leq r<M$ and $0\leq q<L$; call the set with fixed $r$ a lane. For every
$D\in\mathcal A$, choose disjoint lane pairs
\[
 \{r_D,r_D+D\}\subset\mathbb{Z}_M.
\]
This can be done greedily. After $a$ pairs have been chosen, their $2a$
used residues forbid at most $4a$ choices of $r_D$, namely those with
$r_D$ or $r_D+D$ already used. Since $a<|\mathcal A|<M/4$, we have
$4a<M$, so a choice remains. Also $r_D\neq r_D+D$ in $\mathbb{Z}_M$
because $M\nmid D$.

Put the internal matching on lane $r_D$. For $x$ in this lane, define
$F_D(x)$ to be the unique element of $\{0,\ldots,N_0-1\}$ satisfying
\[
 F_D(x)\equiv x+D\pmod{N_0}.
\]
Then $F_D$ maps lane $r_D$ bijectively onto lane $r_D+D$ and induces a cyclic
translation on their $q$-coordinates. For each matching edge $\{x,y\}$ in lane $r_D$,
make
\[
 \{x,y,F_D(x),F_D(y)\}
\]
one class. Its four points are distinct because the two lanes are disjoint
and $F_D$ is injective. These classes partition the two lanes, and the
classes on lane $r_D+D$ pair its points according to a cyclic translate of
the internal matching. On every unused lane put
the internal matching and group its edges in pairs. This is possible because
$4\mid L$. We have partitioned all $N_0$ points into four-point classes.
By construction, every matched pair in every lane lies in one class, whether
the lane carries the internal matching or a cyclic translate.

Let $a,a+d,\ldots,a+(k-1)d$ be a $k$-term arithmetic progression in
$\{0,\ldots,N_0-1\}$, where $d\geq1$. Since
$a+(k-1)d\leq N_0-1$, we have $d\leq P$.

If $M\mid d$, write $d=Me$, where $e$ is a positive integer. Then all terms
lie in one lane, and their $q$-coordinates form a $k$-term progression of
difference $e\leq\lfloor P/M\rfloor$. Hence $e\mid H$.
The matching observation above gives a matched pair of distinct terms in this
lane, and that pair lies in one class.

Suppose instead that $M\nmid d$. Choose $D=jd$ as in the third hypothesis,
with $j$ an integer and $1\leq j\leq k-M$.
There are $k-j\geq M$ eligible lower indices $0\leq i\leq k-1-j$.
Because $M$ is prime and $M\nmid d$, the lane residues of any $M$
consecutive terms are all of $\mathbb{Z}_M$. Thus some eligible $i$ has
$a+id$ in lane $r_D$.
\begin{samepage}
Both selected points are actual progression terms, so
\[
 F_D(a+id)=a+(i+j)d
\]
without a cyclic wrap.
\end{samepage}
The terms $a+id$ and $a+(i+j)d$ belong to the same
four-point class. Thus in either case two distinct terms lie in one class.
\end{proof}

\begin{lemma}\label{lem:prefix-coarsening}
Let $N_0$ be a positive integer, let $\mathcal C$ partition
$\{0,\ldots,N_0-1\}$ into four-point sets, let $N$ be a positive multiple of
$12$, and put $R=N_0-N\geq0$. If $N>12R$,
then $\{0,\ldots,N-1\}$ can be partitioned into $N/12$ twelve-point sets,
each of which is a union of nonempty intersections
\[
 C\cap\{0,\ldots,N-1\},\qquad C\in\mathcal C.
\]
\end{lemma}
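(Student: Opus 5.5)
Restrict each class of $\mathcal C$ to $I=\{0,\ldots,N-1\}$. Each nonempty trace $C\cap I$ has size $1,2,3$ or $4$. A class has size less than $4$ only if it meets the removed tail $\{N,\ldots,N_0-1\}$, which has $R$ points. So at most $R$ traces are \emph{short}, and the short traces are missing at most $R$ points in total. The task is to group the traces into twelve-point unions; the $4$-traces are the flexible building blocks, so the plan is to pair the short traces among themselves and with full traces until every group has exactly $12$ points.

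Write $n_s$ for the number of traces of size $s$, so $n_1+2n_2+3n_3+4n_4=N$. First I handle the short traces greedily. Each $3$-trace goes with a $1$-trace to give a $4$-block, and two $2$-traces give a $4$-block. The leftovers are then of limited form: at most one $2$-trace, and either only $3$-traces or only $1$-traces.
\begin{itemize}
\item If $3$-traces remain, four $3$-traces make a $12$-set on their own. Three $3$-traces, or one $3$-trace, plus $1$-trace material may be needed for residues.
\item More simply, I work modulo $4$ with units of size $4$. Combine three $1$-traces with... no — $1+3=4$, $2+2=4$, $1+1+2=4$, $1+1+1+1=4$, and $3+3+2=8$, $3+3+3+3=12$, $2+3+3+4=12$.
\end{itemize}
The cleanest route is to show that the short traces can be grouped into sets of sizes summing to multiples of $4$, using a bounded number of $4$-traces where needed. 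Since $N\equiv0\pmod{12}$, the total size of the short traces is $\equiv0\pmod 4$. A direct case analysis on residues then shows that the short traces, together with at most two full $4$-traces per leftover group, form blocks of sizes $4$, $8$ or $12$, where each block of size $4$ or $8$ uses only short traces or short traces plus one full trace.

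Next I assemble the $12$-sets from three kinds of units: $12$-blocks, $8$-blocks each completed by one $4$-trace, and $4$-blocks, where the $4$-blocks and the remaining $4$-traces are grouped in threes. Counting points, the leftover $4$-units total a multiple of $12$ because $N$ is. So the only thing that can fail is running out of $4$-traces to complete the $8$-blocks, or to pad the residue cases. The number of short traces is at most $R$, so the number of $8$-blocks and padding requests is $O(R)$, certainly at most $R$. Meanwhile $n_4\ge (N-3R)/4$, since the short traces cover at most $3R$ points. The hypothesis $N>12R$ gives $n_4> (12R-3R)/4>2R$, which is more than enough.

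The main obstacle is the residue bookkeeping in the greedy pairing. I need every leftover configuration of short traces (for instance $3,3,2$, or several $3$'s with no $1$'s) to be completable, with the total handled modulo $4$ and then modulo $12$. I would first do a clean mod-$4$ argument on sizes $\{1,2,3\}$. Then I would fix the mod-$12$ remainder by grouping $4$-units in threes, since every unit formed has size divisible by $4$ and the grand total is $N\equiv 0\pmod{12}$.
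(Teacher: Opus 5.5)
Your argument is correct, but it takes a different route from the paper's. The paper never sorts fragments by size. It repeatedly takes any four short fragments and uses the five partial sums modulo $4$ to extract a consecutive run of total $4$, $8$ or $12$. It then pads \emph{every} resulting group to $12$ with up to two full four-point fragments. This consumes up to $2R$ full fragments, which is exactly where $F\geq N/4-R>2R$, i.e.\ $N>12R$, enters; the $3(N/12-g)$ leftover full fragments are then grouped in threes. You instead pair $3+1$ and $2+2$ greedily and treat the resulting $4$-blocks as interchangeable with full traces. The residue bookkeeping you call the main obstacle closes in one line, and you should write it out rather than leave the false start in the draft. After maximal pairing, the leftover is $a$ traces all of size $3$ or all of size $1$, plus $\epsilon\in\{0,1\}$ traces of size $2$. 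Its total is $0\bmod 4$, so $a\equiv2\epsilon\pmod 4$, and it splits into copies of $3+3+3+3$ (resp.\ $1+1+1+1$) and at most one $3+3+2$ (resp.\ $1+1+2$). No full traces are needed at this stage, so the phrase ``at most two full $4$-traces per leftover group'' can go, and your assembly step then works as stated. What your route buys is a sharper result. There is at most one $8$-block, and when it occurs the $4$-units (full traces and $4$-blocks) have total $\equiv N-8\equiv4\pmod{12}$. So at least one $4$-unit exists to complete it, and the remaining $4$-units total $0\bmod 12$. Hence, if you allow the $8$-block to be completed by any $4$-unit, the lemma holds with no hypothesis relating $N$ and $R$. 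The paper's route buys uniformity: a single pigeonhole step with no case split on fragment sizes. The price is needing many spare full fragments, which the hypothesis $N>12R$ supplies.
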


\begin{proof}
Call the nonempty intersections fragments. Let $F$ be the number of
four-point fragments, and let $b$ and $W$ be the number and total size of
the remaining fragments. Each of the latter comes from a distinct class of
$\mathcal C$ meeting the complement of the prefix, so $b\leq R$. At most
$R$ classes of $\mathcal C$ meet that complement, and hence
\[
 F\geq N_0/4-R\geq N/4-R.
\]
Since $N=4F+W$ and $12\mid N$, we have $4\mid W$.

Partition the smaller fragments into groups of total size $4$, $8$, or
$12$. To do this, whenever at least four fragments remain, order any four
of their sizes and consider the five partial sums, beginning with zero,
modulo $4$. Two partial sums are congruent modulo $4$, so a nonempty
consecutive subcollection of at most four fragments has total size divisible
by $4$, and its total is at most $12$. Remove it and repeat. Fewer than four
fragments eventually remain;
their total is still divisible by $4$ and, if nonzero, is $4$ or $8$.

Let the resulting group sizes be $w_1,\ldots,w_g$. Then $g\leq b\leq R$.
Pad a group of size $w_h$ to size $12$ using $(12-w_h)/4$ four-point
fragments. At most $2g\leq2R$ full fragments are required, while
\[
 F\geq N/4-R>2R.
\]
Thus the padding is possible. The number of full fragments used is
\[
 a_0=\sum_{h=1}^g\frac{12-w_h}{4}=3g-W/4.
\]
The number of full fragments left is therefore
\[
 F-a_0=F+W/4-3g=N/4-3g=3(N/12-g).
\]
Group these remaining fragments three at a time. Together with the $g$
padded groups, this gives exactly $N/12$ sets of size $12$ without
splitting a fragment.
\end{proof}

\section{Proof of the lower bound}

\begin{theorem}\label{thm:lower-bound}
There is an absolute constant $c>0$ such that, for every sufficiently large
integer $k$,
\[
 T_k>c k^2\log k.
\]
\end{theorem}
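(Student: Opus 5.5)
We need, for every $t\le ck^2\log k$ (with $k$ large), some $n$ and an equinumerous $t$-coloring of $[tn]$ with no rainbow $k$-AP. The plan is to combine Lemma~\ref{lem:separation-cover}, Lemma~\ref{lem:lane-coloring} and Lemma~\ref{lem:prefix-coarsening}. We build a partition of $\{0,\ldots,N-1\}$ with $t\mid N$ into classes such that every $k$-AP has two distinct terms in one class. Then we merge classes into $t$ equal color classes. Merging preserves the property that two terms share a color, so no $k$-AP is rainbow.

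\textbf{Parameters.} Take a prime $M$ with $k/4\le M\le k/2$ (Bertrand), so $k-M\ge k/2$. Put $J=k-M$. We want $P\approx N_0/(k-1)$ with $J\le P\le J^2$ and $|\mathcal A|\le C_1P/\log J<M/4$. This needs $P\le c'k\log k$, i.e.\ $N_0\approx c'k^2\log k$; we choose $P$ this way with $c'$ small relative to $C_1$. Then $P/M=O(\log k)$, so hypothesis~2 of Lemma~\ref{lem:lane-coloring} asks that $H$ be divisible by $\operatorname{lcm}(1,\ldots,\lfloor P/M\rfloor)\le e^{C_0P/M}$ (Lemma~\ref{lem:chebyshev-lcm}). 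We take $H$ to be this lcm (times $1$). We need $2H\le k-1$, and this forces $C_0P/M\le \log k-O(1)$, i.e.\ $P\le (M/C_0)(\log k-2)$. This is again of order $k\log k$, so choosing $c'$ small (and $M\ge k/4$) makes both constraints compatible.

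\textbf{Sizes and divisibility.} We need $L$ with $4H\mid L$ and $N_0=ML$. The hypothesis uses $P=\lfloor (N_0-1)/(k-1)\rfloor$, and adjusting $L$ in steps of $4H\le 2k$ moves $P$ by $O(1)$, which is harmless. Next pick the target $N$, a multiple of $12$ and of $t$, with $N\le N_0$ and $N_0-N<N/12$. By Lemma~\ref{lem:prefix-coarsening} we get $N/12$ twelve-point sets, each a union of fragments of the four-point classes. Grouping these twelve-point sets into $t$ colors of equal size requires $t\mid N/12$.

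\textbf{Main obstacle.} The main difficulty is this divisibility and scaling: $t$ is arbitrary up to $ck^2\log k$, while $N_0$ is pinned near $c'k^2\log k$, and for $N=tn$ we need $N/12$ divisible by $t$. I would instead take $N=12t$, which uses exactly one twelve-point set per color. For this to work, $N_0$ must lie in $[12t,\tfrac{13}{12}\cdot 12t)$. Since $L$ can be chosen among multiples of $4H$ with granularity $4HM=O(k^2)=o(t)$ when $t\asymp k^2\log k$, we can choose $L$ with $N_0\in[12t,13t)$ provided $12t$ is in the admissible range for $N_0$. That fixes $t$ roughly in $[c_1k^2\log k,c_2k^2\log k]$. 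For smaller $t$, I would use that $T_k$ is the minimum $t$ working for all $n$, so it suffices to exhibit a single $t_0\ge ck^2\log k$ failing. The catch is that this monotonicity is not obvious. So I would handle general $t\le ck^2\log k$ by taking $N=12tn$ with $n$ chosen so that $12tn$ lies in $[N_0\cdot 12/13,N_0]$, which is possible since $12t\le N_0/13$ once $c$ is small. Coarsening $N/12=tn$ sets into $t$ colors of $12n$ points each then completes the construction. Finally, translate by one to pass to $[N]$.
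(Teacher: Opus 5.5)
Your proposal is correct, and it handles the scaling and divisibility step differently from the paper.

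\textbf{The paper's route.} The paper fixes $t$ first and takes $N=12\lceil k^2/t\rceil t\in[12k^2,12(k^2+t))$. It then builds $N_0=ML$ just above $N$ by rounding $L$ up to a multiple of $4H$. The leftover $R=N_0-N<4MH=o(k^{5/4})$ is therefore tiny, and $N>12R$ is automatic. So the scale of the construction varies with $t$. A single constant $c$ both bounds $t$ and, via $N<12(k^2+t)$, controls $P$, $H$ and $|\mathcal A|$.

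\textbf{Your route.} You build one blocking partition at a fixed scale $N_0\asymp c'k^2\log k$, independent of $t$. You then truncate to the prefix of length $N=12t\lfloor N_0/(12t)\rfloor$. Since $N>N_0-12t\geq 12N_0/13$ once $12t\le N_0/13$, you get $N>12R$. This uses the full slack of Lemma~\ref{lem:prefix-coarsening} rather than a negligible remainder.

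\textbf{Comparison.} Your version decouples two constants. The scale constant $c'$ is governed by $|\mathcal A|<M/4$ and $4H\le k-1$. The bound on $t$ is governed by $c\le c'/156$. One partition then serves every $t$ at once, and you correctly avoid any monotonicity-in-$t$ argument. The paper's version keeps $N$ as small as possible for each $t$ and needs only the two conditions $61C_0c<\frac14$ and $12C_1c<\frac1{40}$ on one constant.

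\textbf{A small repair.} Your remark that rounding $L$ to a multiple of $4H$ ``moves $P$ by $O(1)$'' is not accurate: it moves $P$ by $\Theta(H)$. What moves by $O(1)$ is $\lfloor P/M\rfloor\approx L/(k-1)$, by up to $2$ when only $2H\le k-1$ is assumed. That shift matters if $H$ is exactly the lcm up to the pre-rounding value. To break the circularity in ``$H=\operatorname{lcm}(1,\ldots,\lfloor P/M\rfloor)$'', do the following:
\begin{itemize}
\item fix a target $\lambda$ and set $q=\lceil\lambda/(k-1)\rceil$;
\item take $H=\operatorname{lcm}(1,\ldots,q+1)$, or impose $4H\le k-1$ as the paper does;
\item round $L$ up to a multiple of $4H$;
\item verify that $\lfloor P/M\rfloor\le q+1$ (respectively $\le q$).
\end{itemize}
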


\begin{proof}
Let $C_0$ and $C_1$ be the constants in
Lemmas~\ref{lem:chebyshev-lcm} and \ref{lem:separation-cover}. Fix a
positive constant $c$ so small that
\[
 61C_0c<\frac14,
 \qquad 12C_1c<\frac1{40}.
\]
Apply Bertrand's postulate with $x=\lfloor k/4\rfloor$. For all sufficiently
large $k$, it gives a prime $M$ with
\[
 \frac{k}{5}<M<\frac{k}{2}.
\]
Put $J=k-M$, so $k/2<J<4k/5$.

Fix any integer $t$ with
\[
 1\leq t\leq\lfloor c k^2\log k\rfloor.
\]
All asymptotic notation in this proof is as $k\to\infty$ and is uniform over
this range of $t$.
Define
\[
 \begin{gathered}
 s=\left\lceil\frac{k^2}{t}\right\rceil,
 \qquad N=12st,
 \qquad \lambda=N/M,
 \qquad q=\left\lceil\frac{\lambda}{k-1}\right\rceil,\\
 H=\operatorname{lcm}(1,\ldots,q),
 \qquad L=4H\left\lceil\frac{\lambda}{4H}\right\rceil,\\
 N_0=ML,
 \qquad R=N_0-N,
 \qquad P=\left\lfloor\frac{N_0-1}{k-1}\right\rfloor.
 \end{gathered}
\]
We have
\[
 k^2\leq st<k^2+t,
 \qquad 0\leq R=M(L-\lambda)<4MH,
\]
the last inequality because $L<\lambda+4H$.
Since $N<12(k^2+t)\leq12k^2(1+c\log k)$ and $M>k/5$,
for all sufficiently large $k$,
\[
 q\leq\frac{N}{M(k-1)}+1
 <\frac{60k}{k-1}(1+c\log k)+1
 \leq62+61c\log k,
\]
using $60k/(k-1)\leq61$ for $k\geq61$.
Moreover, $N\geq12k^2$ and $M<k/2$, so
\[
 q\geq\frac{\lambda}{k-1}=\frac{N}{M(k-1)}
 >\frac{24k}{k-1}>2.
\]
The least-common-multiple estimate \eqref{eq:chebyshev-lcm-bounds}
therefore gives
\[
 H\leq e^{C_0q}\leq e^{62C_0}k^{61C_0c}.
\]
Since $61C_0c<\frac14$, this makes $H=o(k^{1/4})$, so, uniformly in $t$,
\[
 4H\leq k-1,
 \qquad R<4MH<2ke^{62C_0}k^{61C_0c}=o(k^{5/4}),
 \qquad N\geq12k^2>12R
\]
for all sufficiently large $k$.

Put $Q=\lfloor P/M\rfloor$. Since $P<ML/(k-1)$ and
$L<\lambda+4H\leq\lambda+k-1$, we have
\[
 Q<\frac{L}{k-1}
   <\frac{\lambda}{k-1}+1
   \leq q+1.
\]
Thus $Q\leq q$, and every positive integer at most $Q$ divides $H$.

Since $N_0=N+R<12(k^2+t)+o(k^2)$, the bounds on $s$, $t$, and $R$ give
\begin{align*}
 P&\geq\frac{12k^2-1}{k-1}-1=(12+o(1))k>J,\\
 P&<\frac{N_0}{k-1}\leq(12c+o(1))k\log k<J^2.
\end{align*}
Since $J>k/2\to\infty$, Lemma~\ref{lem:separation-cover} applies. It gives a
set $\mathcal A$, none of whose members is divisible by $M$, such that every
positive integer $d\leq P$ with $M\nmid d$ divides some $D\in\mathcal A$ with
$D/d\leq J=k-M$. Moreover, since
$\log J\geq\log(k/2)=(1-o(1))\log k$ and $M/4>k/20$,
\[
 |\mathcal A|
 \leq C_1\frac{P}{\log J}
 \leq(12C_1c+o(1))k
 <\frac M4,
\]
the last inequality by $12C_1c<1/40$. The relations $4H\mid L$,
$2H\leq k-1$, and $Q\leq q$ verify the first two hypotheses of
Lemma~\ref{lem:lane-coloring}. The cover and its size bound verify the third:
for each positive integer $d\leq P$ with $M\nmid d$, choose $D$ as above and
put $j=D/d$. Then $j$ is an integer with $1\leq j\leq J=k-M$. Thus
$\{0,\ldots,N_0-1\}$ has a partition into four-point classes such that every
$k$-term arithmetic progression in the interval contains two distinct terms
in one class.

Apply Lemma~\ref{lem:prefix-coarsening} to the prefix of length $N=12st$.
It gives $st$ twelve-point sets, each a union of intersections with the old
classes. Group these sets in batches of $s$. This partitions the prefix into
$t$ classes of size $12s$. The two blocked terms of any progression lie in
the same old-class fragment, and no fragment is split by the coarsening.
Thus the resulting coloring has no rainbow $k$-term arithmetic progression.

Since $t$ was arbitrary, for every integer $t$ with
\[
 1\leq t\leq\lfloor c k^2\log k\rfloor,
\]
translation by one gives an equinumerous $t$-coloring of $[12st]$ with no
rainbow $k$-term arithmetic progression. Since $12st=t(12s)$, this is a
counterexample to the defining property of $T_k$ with the positive integer
$n=12s$. Hence no such $t$ satisfies that property, so
$T_k>\lfloor c k^2\log k\rfloor$. Since $T_k$ is an integer,
$T_k>c k^2\log k$.
\end{proof}

\section*{Acknowledgments}

Codex with GPT-5.6 and Claude Code with Fable 5 were used
for proof exploration, proof criticism, exposition, and revision.


\begin{thebibliography}{9}
\raggedright

\bibitem{JLMNR}
V. Jungi\'{c}, J. Licht, M. Mahdian, J. Ne\v{s}et\v{r}il and
R. Radoi\v{c}i\'{c},
\emph{Rainbow arithmetic progressions and anti-Ramsey results},
\emph{Combinatorics, Probability and Computing} \textbf{12} (2003),
599--620.

\bibitem{G18}
J. Geneson,
\emph{A note on long rainbow arithmetic progressions},
arXiv:1811.07989, 2018.

\bibitem{BLM}
J. Balogh, W. Linz and L. Mattos,
\emph{Long rainbow arithmetic progressions},
\emph{Journal of Combinatorics} \textbf{12} (2021), no.~3, 547--550.

\bibitem{CFS}
D. Conlon, J. Fox and B. Sudakov,
\emph{Short proofs of some extremal results III},
\emph{Random Structures \& Algorithms} \textbf{57} (2020), no.~4,
958--982.

\end{thebibliography}
\end{document}